\theoremstyle{plain}
\newtheorem{theoremIntro}{Theorem}
\newtheorem{lemmaIntro}[theoremIntro]{Lemma}
\newtheorem{proposition}{Proposition}[section]
\newtheorem{theorem}[proposition]{Theorem}
\newtheorem{lemma}[proposition]{Lemma}
\theoremstyle{definition}
\newtheorem{definition}[proposition]{Definition}
\theoremstyle{remark}
\newtheorem{remark}[proposition]{Remark}
\newcommand\restr[2]{{
  \left.\kern-\nulldelimiterspace
  #1
  \vphantom{\big|}
  \right|_{#2}
  }}
\DeclarePairedDelimiterX{\innerp}[2]{\langle}{\rangle}{#1,#2}
\newcommand{\dif}{\mathop{}\!d}
\DeclareMathOperator{\hodgestar}{\star}
\DeclareMathOperator{\identity}{Id}
\newcommand{\emptyslot}{-}
\newcommand{\R}{\mathbb{R}}
\tikzstyle{mytheorembox} = [draw=vdgreen, fill=blue!20, very thick, rectangle, rounded corners, inner sep=10pt, inner ysep=15pt]
\tikzstyle{mytheoremfancytitle} =[fill=vdgreen, text=white]
\definecolor{vdblue}{rgb}{0,0,.3}
\definecolor{dblue}{rgb}{0,0,.7}
\definecolor{lblue}{rgb}{.3,.3,1}
\definecolor{vlblue}{rgb}{.7,.7,1}
\definecolor{vvlblue}{rgb}{.9,.9,1}
\definecolor{vdred}{rgb}{.3,0,0}
\definecolor{dred}{rgb}{.7,0,0}
\definecolor{lred}{rgb}{1,.3,.3}
\definecolor{vlred}{rgb}{1,.7,.7}
\definecolor{vdgreen}{rgb}{0,.2,0}
\definecolor{dgreen}{rgb}{0,.4,0}
\definecolor{lgreen}{rgb}{.3,1,.3}
\definecolor{vlgreen}{rgb}{.7,1,.7}
\definecolor{lyellow}{rgb}{1,1,.3}
\definecolor{gray1}{rgb}{0.22,0.22,0.22}
\definecolor{gray2}{rgb}{0.28,0.28,0.28}
\definecolor{gray3}{rgb}{0.36,0.36,0.36}
\definecolor{gray4}{rgb}{0.44,0.44,0.44}
\definecolor{gray5}{rgb}{0.52,0.52,0.52}
\definecolor{gray6}{rgb}{0.6,0.6,0.6}
\definecolor{gray7}{rgb}{0.68,0.68,0.68}
\definecolor{gray8}{rgb}{0.76,0.76,0.76}
\definecolor{color1}{rgb}{1,0,0}
\definecolor{color2}{rgb}{0.98,0,0.816}
\definecolor{color3}{rgb}{0.717,0,1}
\definecolor{color4}{rgb}{0,0,1}
\definecolor{color5}{rgb}{0,1,1}
\definecolor{color6}{rgb}{0,1,0}
\definecolor{color8}{rgb}{1,1,0}
\definecolor{color7}{rgb}{1,0.651,0}
\begin{document}
\title{Turing complete Navier-Stokes steady states via\\ cosymplectic geometry}
\author{Søren Dyhr}
\address{
	Søren Dyhr, Laboratory of Geometry and Dynamical Systems, Universitat Po\-li\-tèc\-ni\-ca de Ca\-ta\-lunya, 08028 Barcelona, and
	Centre de Recerca Matemàtica, Edifici C, Campus Bellaterra, 08193 Bellaterra, Barcelona.
}
\email{soren.dyhr@upc.edu}

\author{Ángel González-Prieto}
\address{ \'Angel Gonz\'alez-Prieto, Facultad de CC.\ Matem\'aticas, Universidad Complutense de Madrid, 28040 Madrid, Spain, and Instituto de Ciencias Matem\'aticas,
28049 Madrid, Spain.}
\email{angelgonzalezprieto@ucm.es}
\author{Eva Miranda}\address{Eva Miranda,
Laboratory of Geometry and Dynamical Systems, Department of Mathematics, EPSEB, Universitat Polit\`{e}cnica de Catalunya-IMTech, Barcelona, and Centre de Recerca Matem\`{a}tica, Edifici C, Campus de Bellaterra, 08193 Bellaterra, Barcelona.
 }
 \email{eva.miranda@upc.edu}
 \author{Daniel Peralta-Salas} \address{Daniel Peralta-Salas, Instituto de Ciencias Matem\'aticas, Consejo Superior de Investigaciones Cient\'ificas,
28049 Madrid, Spain.}
\email{dperalta@icmat.es}

\thanks{S.\ Dyhr received the support of a fellowship from the ”la Caixa” Foundation (ID 100010434) with fellowship code LCF/BQ/DI23/11990085. Á.\ González-Prieto is supported by the Spanish State Research Agency project PID2021-124440NB-I00. E.\ Miranda is supported by the Catalan Institution for Research and Advanced Studies via an ICREA Academia Prize 2021. D.\ Peralta-Salas is supported by the grants RED2022-134301-T and PID2022-136795NB-I00 funded by MCIN/AEI/10.13039/501100011033.  S.\ Dyhr and E.\ Miranda are supported by the Spanish State Research Agency project PID2023-146936NB-I00 funded by MICIU/AEI/10.13039/501100011033. \'A.\ Gonz\'alez-Prieto and D.\ Peralta-Salas are supported by the project CEX2023-001347-S. 
The last three authors are supported by the Bilateral AEI-DFG project: Celestial Mechanics, Hydrodynamics, and Turing Machines with reference codes PCI2024-155042-2 and PCI2024-155062-2. All authors are partially supported by the project “Computational, dynamical and geometrical complexity in fluid dynamics” (COMPLEXFLUIDS), Ayudas Fundación BBVA a Proyectos de Investigación Científica 2021.
This work is supported by the Spanish State Research Agency, through the Severo Ochoa and María de Maeztu Program for Centers and Units of Excellence in R\&D (CEX2020-001084-M). We thank CERCA Programme/Generalitat de Catalunya for institutional support.}

\begin{abstract} 
In this article, we construct stationary solutions to the Navier-Stokes equations on certain Riemannian $3$-manifolds that exhibit Turing completeness, in the sense that they are capable of performing universal computation. This universality arises on manifolds admitting nonvanishing harmonic 1-forms, thus showing that computational universality is not obstructed by viscosity, provided the underlying geometry satisfies a mild cohomological condition. The proof makes use of a correspondence between nonvanishing harmonic $1$-forms and cosymplectic geometry, which extends the classical correspondence between Beltrami fields and Reeb flows on contact manifolds.
\end{abstract}

\maketitle

\section*{Significance Statement}

The Navier-Stokes equations govern the behaviour of viscous fluids and are at the heart of classical physics, engineering, and applied mathematics. Despite their ubiquity, fundamental questions about their solutions remain unresolved. This work establishes, for the first time, the existence of stationary fluid flows governed by the Navier-Stokes equations that can simulate any computation performable by a universal Turing machine. These Turing complete solutions are constructed on geometrically natural 3-manifolds using tools from cosymplectic geometry and the theory of harmonic forms. The result reveals that undecidability—a hallmark of computational complexity—can arise even in the steady state of viscous fluid motion, reshaping our understanding of what fluids can (and cannot) compute.

\section{Introduction}

In 2016, Terence Tao \cite{taoblowup} (see also \cite{taosingularities}) proposed an original and provocative approach to address the celebrated blow-up problem for the Navier-Stokes equations: instead of directly proving regularity or blow-up, one might attempt to encode universal computation into the Navier-Stokes dynamics. If successful, this would imply that the long-term behaviour of solutions is as unpredictable as the halting problem, a hallmark of undecidability and Turing completeness. Such an approach reframes the regularity question within the broader context of computational complexity and logic, potentially explaining why the problem has resisted classical  techniques.

Motivated by this perspective, recent research has shown the existence of Turing complete solutions to the Euler equations on some Riemannian manifolds, both for the stationary and the time-dependent equations \cite{CMPP2}, \cite{CMP3}, \cite{mirror}, \cite{fluidcomputer} and \cite{timedependent}. However, the case of the Navier-Stokes equations has remained open until now. A particularly successful approach for the construction of Turing complete Euler flows is the correspondence, unveiled by Sullivan and developed by Etnyre and Ghrist \cite{etnyreContactTopologyHydrodynamics2000}, between stationary solutions to the Euler equations and Reeb vector fields of contact structures on three-manifolds. This relationship offers an interesting geometric view through which ideal fluids can be interpreted. Unfortunately, the aforementioned correspondence does not allow one to construct solutions to the Navier-Stokes equations.

In this article, we aim to construct Turing complete solutions to the Navier-Stokes equations on some Riemannian $3$-manifolds, making use of some geometric ideas that extend the Etnyre-Ghrist correspondence. Specifically, using the machinery of cosymplectic geometry, we construct harmonic vector fields that simulate any Turing machine. These harmonic fields are stationary solutions to the Navier-Stokes equations, providing the Turing completeness of these flows. Notice that, as a consequence, the resulting flow has undecidable long-term behaviour.

To state our main theorem in a precise way, we next introduce some notation. Let \( (M, g) \) be a smooth compact Riemannian manifold without boundary. This structure defines the so-called Hodge Laplacian $\Delta = dd^* + d^*d \colon \Omega^k(M) \to \Omega^k(M)$ on $k$-differential forms, where $d^* \colon \Omega^k(M) \to \Omega^{k-1}(M)$ is the adjoint operator of the usual exterior differential $d \colon \Omega^k(M) \to \Omega^{k+1}(M)$. Furthermore, this Laplacian operator can be extended to vector fields $X \in \mathfrak{X}(M)$ by setting $\Delta X := (\Delta X^\flat)^\sharp$, where $X^\flat = g(X, \emptyslot)$ is the dual $1$-form with inverse $\sharp$.


The Navier-Stokes equations on $(M,g)$ describe the motion of an incompressible viscous fluid.
Given a viscosity coefficient $\nu > 0$, the system seeks (possibly non-autonomous) vector fields $X$ and pressure functions $p$ satisfying the equations:
\begin{equation}\label{eq:NS}
\begin{cases}
\displaystyle\frac{\partial X}{\partial t} + \nabla_X X - \nu \Delta X = -\nabla p\,, \\[5pt]
\operatorname{div} X = 0\,,
\end{cases}
\end{equation}
where all differential operators are defined with respect to the Riemannian metric $g$.
The first equation reflects Newton’s second law, incorporating advection, viscous dissipation, and pressure forces. The second one enforces the incompressibility condition.

In this direction, a smooth Riemannian manifold \( (M, g) \) will be said to be \emph{Hodge-admissible} if it admits a \emph{nowhere vanishing harmonic vector field}, that is, a smooth vector field \( X \in \mathfrak{X}(M) \) such that its dual $1$-form \( \alpha := X^\flat \) satisfies
\[
d\alpha = 0 \quad \text{and} \quad d^* \alpha = 0,
\]
and \( \alpha \) is nowhere zero on \( M \). Equivalently, \( \alpha \) lies in the kernel of the Hodge Laplacian \( \Delta = dd^* + d^*d \) and is nowhere vanishing. It is worth noticing that this condition is stronger than simply requiring the existence of (nontrivial) harmonic 1-forms: while the latter is a topological condition governed by the first cohomology group \( H^1(M) \), the Hodge-admissibility condition imposes geometric and analytic constraints on the structure of the manifold and its vector fields. It is well known that harmonic vector fields are stationary solutions to the Navier-Stokes equations, independently of the value of the viscosity $\nu\geq 0$. 

Now we are ready to state our main result. A straightforward but remarkable consequence of this theorem is the existence of viscous fluid flows exhibiting undecidable trajectories.

\begin{theoremIntro}\label{T:main} Given any Hodge-admissible Riemannian $3$-manifold $(M,g)$, there exists a (generally not small) deformation $g_\epsilon$ of the metric $g$ and a solution of the stationary Navier-Stokes equations associated to $g_\epsilon$ that is Turing complete, for any value of the viscosity $\nu\geq 0$.
\end{theoremIntro}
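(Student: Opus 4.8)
The plan is to upgrade the Etnyre–Ghrist philosophy from contact/Reeb geometry to cosymplectic geometry, the natural setting for nonvanishing harmonic $1$-forms. First I would exploit the Hodge-admissibility hypothesis: given a nowhere-zero harmonic $1$-form $\alpha = X^\flat$, the pair consisting of $\alpha$ and a suitable closed $2$-form $\omega$ should furnish a cosymplectic structure on $M$ (a closed $1$-form and closed $2$-form with $\alpha \wedge \omega^{n}$ a volume form; in dimension $3$ this is $\alpha \wedge \omega \neq 0$). The harmonic field $X$ plays the role of the Reeb field of this cosymplectic structure. The key point, which I expect to have been established earlier in the paper, is a \emph{mirror-type} or \emph{geometrization} statement: a vector field $Y$ on a cosymplectic $3$-manifold that is the Reeb field of some cosymplectic structure is, after an appropriate choice of compatible metric $g_\epsilon$, a nowhere-vanishing harmonic vector field for $g_\epsilon$, hence (being harmonic, in particular $\nabla_Y Y$ a gradient and $\operatorname{div} Y = 0$, $\Delta Y = 0$) a stationary solution of Navier–Stokes for every $\nu \geq 0$. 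This replaces the contact-geometry input of the Euler case and is what makes viscosity harmless: harmonic fields kill the $\nu \Delta X$ term identically.

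Next I would import the existing construction of a Turing complete flow in the relevant geometric category. Following \cite{CMPP2, fluidcomputer}, there is an area-preserving diffeomorphism of a disk (or a suitable surface) that simulates a universal Turing machine via a symbolic encoding of tape configurations into a Cantor-like set of points, with the halting problem translating into the reachability of an explicit open set. I would realize this diffeomorphism as (a return map of) the Reeb flow of a cosymplectic structure on a mapping-torus-type $3$-manifold, or more precisely embed such a plug into the given Hodge-admissible manifold $M$. Concretely: near a periodic orbit of the harmonic field $X$ one has a flow-box $\mathcal{D} \times S^1$ on which the cosymplectic Reeb flow is a suspension; one then modifies the cosymplectic structure inside this flow-box so that the return map on $\mathcal{D}$ becomes the universal area-preserving map, while keeping the modified $\alpha$ nowhere zero and closed and $\alpha \wedge \omega$ nonvanishing. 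Because the modification is localized and preserves the cosymplectic class in the relevant sense, the ambient field remains the Reeb field of a global cosymplectic structure on $M$.

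Finally I would run the geometrization step on the modified cosymplectic structure: choose a compatible Riemannian metric $g_\epsilon$ — not necessarily $C^0$-close to $g$, whence the "generally not small" deformation — for which the modified Reeb field $X_\epsilon$ is harmonic, hence a stationary Navier–Stokes solution for all $\nu \geq 0$. Turing completeness of $X_\epsilon$ is inherited from the embedded universal return map: the halting problem for the simulated machine reduces to a trajectory-reachability question for $X_\epsilon$, so the long-term dynamics is undecidable. The main obstacle I anticipate is the compatibility of the localized Turing-complete modification with the global cosymplectic (and then harmonic) structure: one must ensure that inserting the computational plug does not destroy the nonvanishing of $\alpha$ or the volume condition $\alpha \wedge \omega \neq 0$ anywhere, and that the resulting Reeb field genuinely admits a \emph{compatible} metric making it harmonic on all of $M$ — this is precisely where the cosymplectic analogue of the Etnyre–Ghrist correspondence, rather than contact geometry, is essential, and where the bulk of the technical work lies.
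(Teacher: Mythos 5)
Your strategy coincides with the paper's: use the cosymplectic correspondence to convert the nonvanishing harmonic field into a cosymplectic structure $(\alpha,\hodgestar\alpha)$, insert a localized ``computational plug'' realizing the universal area-preserving disk diffeomorphism of \cite{CMPP2} as a Reeb return map, and then apply the correspondence again to produce a compatible metric $g_\epsilon$ for which the modified Reeb field is harmonic, hence a stationary Navier--Stokes solution for every $\nu\geq 0$. The one step that would fail as written is your mechanism for locating the solid torus: you propose to work ``near a periodic orbit of the harmonic field $X$,'' but a nonvanishing harmonic field need not have any periodic orbits (the constant field of irrational slope on the flat $\T^3$ is harmonic, nowhere zero, and has no closed orbits). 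The paper instead uses that $\alpha$ is \emph{intrinsically} harmonic, so by Farber's theorem \cite[Thm.~9.11]{farberTopologyClosedOneForms2004} every point of the compact manifold lies on an embedded circle transverse to $\ker\alpha$; a tubular neighbourhood of such a transverse circle, after a fibered change of coordinates straightening the restriction of $\alpha$ to $c\,\dif t$, is the solid torus into which the plug is inserted. The circle is transverse to the foliation $\ker\alpha$, not an orbit of $X$, and this is what makes the construction work on every Hodge-admissible manifold.

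A second, smaller point: in the insertion step the paper keeps the $1$-form $\alpha$ entirely unchanged and deforms only the $2$-form $\hodgestar\alpha$ to a new closed $\tilde\beta$ (via a cutoff of a local primitive $\eta$ of $\beta$ on a slightly larger toroidal neighbourhood, corrected by a term $k\,\dif\theta$ to keep $\alpha\wedge\tilde\beta>0$). Your phrasing ``keeping the modified $\alpha$ nowhere zero and closed'' suggests you might also deform $\alpha$; this is unnecessary and would complicate the final harmonicity argument, since fixing $\alpha$ and changing only the transverse area form is what lets the new metric agree with $g$ outside the plug and makes $\alpha$ harmonic for it directly by the correspondence. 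Otherwise your outline, including the identification of where the technical work lies, matches the paper.
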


The proof of Theorem~\ref{T:main} exploits a connection between harmonic forms and weak cosymplectic structures.
We recall that a \emph{weak cosymplectic structure} on a $(2n+1)$-dimensional manifold $M$ is a pair $(\alpha, \omega)$ of closed forms, where $\alpha \in \Omega^1(M)$ and $\omega \in \Omega^{2n}(M)$, such that $\alpha \wedge \omega$ is a volume form.
This generalizes \emph{cosymplectic structures}, where \( \omega = \beta^{n} \) for a closed \( \beta \in \Omega^{2}(M) \).
This structure contrasts with contact forms, where the somehow opposite maximal non-integrability is required. While contact structures exist on all closed 3-manifolds, cosymplectic structures impose topological constraints, such as nontrivial first Betti number, which have been studied in detail in \cite{CLM}, and are fully characterized in the compact case in \cite{GMP1}. Cosymplectic structures also arise naturally as critical hypersurfaces in $b$-symplectic or log symplectic manifolds \cite{GMP2}, where they model degenerate limits of symplectic structures, often in connection with manifolds with boundary or partial compactifications.

The following elementary lemma, which is crucial for the proof of Theorem~\ref{T:main}, shows that there is a correspondence between harmonic vector fields and weak cosymplectic structures in any dimension.

\begin{lemmaIntro}
    [The cosymplectic correspondence]\label{thm:mainCorrespondenceResult}
There exists a correspondence between pairs $(X, g)$ of nonvanishing harmonic vector fields and Riemannian metrics on a manifold $M$, and weak cosymplectic structures $(\alpha, \omega)$ satisfying $\alpha = \iota_X g$.
\end{lemmaIntro}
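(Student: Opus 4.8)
The plan is to exhibit the correspondence directly, constructing each side from the other and checking that the two constructions are mutually inverse. Suppose first that $(X,g)$ is a pair consisting of a nowhere vanishing harmonic vector field and a Riemannian metric on $M^{2n+1}$. Set $\alpha := \iota_X g = X^\flat$. By definition of harmonicity, $d\alpha = 0$ and $d^*\alpha = 0$; the latter says $\divergence X = 0$, or equivalently $\Liederivative{X}\mu = 0$ where $\mu$ is the Riemannian volume form. I would then define $\omega := \iota_X \mu \in \Omega^{2n}(M)$. Cartan's formula gives $d\omega = d\iota_X\mu = \Liederivative{X}\mu - \iota_X d\mu = 0$, so $\omega$ is closed. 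Finally, $\alpha \wedge \omega = X^\flat \wedge \iota_X\mu$; evaluating on a local $g$-orthonormal frame adapted so that $X/|X|$ is the first vector, one checks this equals $|X|^2\,\mu$, which is a volume form since $X$ is nowhere zero. Hence $(\alpha,\omega)$ is a weak cosymplectic structure with $\alpha = \iota_X g$.

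Conversely, given a weak cosymplectic structure $(\alpha,\omega)$ on $M$, I want to produce $(X,g)$. The form $\alpha$ is nowhere zero (since $\alpha\wedge\omega$ is a volume form), so its kernel $\xi := \kernel\alpha \subset TM$ is a codimension-one distribution, and likewise $\omega$ restricts to a volume form on $\xi$ at each point. The natural move is to declare the line field transverse to $\xi$ and the hyperplane $\xi$ to be $g$-orthogonal, to fix the metric on $\xi$ by requiring $\omega|_\xi$ to be the induced Riemannian volume form on the oriented hyperplane $\xi$ (this pins down $g|_\xi$ only up to the freedom in choosing an inner product inducing a given volume form, so a choice must be made here), and to choose the transverse direction and its length so that the resulting dual $1$-form is exactly $\alpha$. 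Concretely: let $R$ be the unique vector field with $\iota_R\alpha = 1$ and $\iota_R\omega = 0$ (the Reeb-type field of the weak cosymplectic structure, well-defined because $\alpha\wedge\omega\neq 0$). One then builds $g$ so that $R \perp \xi$, $g(R,R) = 1/|\alpha(\text{anything})|$ — more precisely so that $X := R/g(R,R)$ has $X^\flat = \alpha$ — and $g|_\xi$ is any metric whose volume form is $\omega|_\xi$. With this $g$, the vector field $X$ satisfies $\iota_X g = \alpha$ by construction, hence $dX^\flat = d\alpha = 0$, and $d^*X^\flat$ is (up to sign and Hodge star) $d(\star\alpha)$; the point is that $\star_g\alpha = \omega$ by our choice of metric (orthogonality plus the volume normalization on $\xi$), so $d^*X^\flat = \pm\star d\star\alpha = \pm\star d\omega = 0$. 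Thus $X$ is harmonic and nowhere vanishing.

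The two constructions are inverse up to the gauge freedom just noted: starting from $(X,g)$, passing to $(\alpha,\omega)=(X^\flat,\iota_X\mu_g)$ and back recovers $X$ exactly and recovers $g$ on the orthogonal complement of $X$ only up to metrics with the same volume form — so the correspondence is really between $(\alpha,\omega)$ and pairs $(X,g)$ considered modulo this equivalence on $\xi$, or one fixes a convention. I would state the lemma with that understood (as the word "correspondence" rather than "bijection" already signals). The main technical point — the only place where anything must be verified rather than unwound from definitions — is the identity $\star_g\,\alpha = \omega$, i.e.\ that the metric assembled from the weak cosymplectic data has Hodge star sending the given $1$-form to the given $2n$-form; this is where orthogonality of $R$ to $\xi$ and the normalization $\omega|_\xi = \mathrm{vol}_{g|_\xi}$ are both used, and it is what makes the harmonicity $d^*\alpha = \pm\star d\star\alpha = \pm\star d\omega = 0$ fall out. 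Everything else is Cartan's magic formula and a local-frame computation of $\alpha\wedge\iota_X\mu$.
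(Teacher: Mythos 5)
Your proposal is correct and follows essentially the same route as the paper: forward via $\omega=\iota_X\mu=\hodgestar\alpha$ (your Cartan-formula argument is just the unpacked version of the paper's observation that harmonicity means $\alpha$ and $\hodgestar\alpha$ are both closed), and backward via the Reeb field, an orthogonal splitting $TM=\langle R\rangle\oplus\ker\alpha$, a metric on $\ker\alpha$ with volume $\omega$, and the key identity $\hodgestar_g\alpha=\omega$. The only wobble is your garbled normalization of $g(R,R)$: the condition ``$X=R/g(R,R)$ has $X^\flat=\alpha$'' holds for \emph{any} choice of $g(R,R)$ and so does not pin the metric down; you need $g(R,R)\equiv 1$ (as in the paper) to get $\hodgestar_g\alpha=\omega$ on the nose, since a nonconstant normalization would spoil $d\hodgestar\alpha=0$.
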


Furthermore, by means of this geometric link, the harmonic field $X$ corresponds, up to a proportionality factor, to the \emph{Reeb vector field} of the weak cosymplectic structure $(\alpha, \omega)$, i.e., the unique vector field $Y$ satisfying $\alpha(Y) = 1$ and $\omega(Y, -) =0$, where $Y=\frac{X}{g(X,X)}$.

The article is organized as follows. In Section~\ref{sec:correspondence}, we establish the correspondence between harmonic vector fields and weak cosymplectic structures, providing a detailed proof of Lemma~\ref{thm:mainCorrespondenceResult}. In Section~\ref{sec:Turing}, we review the fundamentals of Turing completeness and its relation with dynamics. Section~\ref{sec:NavierStokesApplication} presents the application to the Navier-Stokes equations, including the construction of a Turing complete flow and a discussion of its computational implications.

\section{Proof of the cosymplectic correspondence - Lemma \ref{thm:mainCorrespondenceResult}}
\label{sec:correspondence}

Recall that a Riemannian metric on a manifold $M$ of dimension $2n+1$ induces the so-called Hodge star operator $\hodgestar \colon \Omega^k(M) \to \Omega^{2n+1-k}(M)$ for all $0 \leq k \leq 2n+1$. In terms of this map, the codifferential $d^* \colon \Omega^{k}(M) \to \Omega^{k-1}(M)$ can be explicitly written as $d^* = (-1)^{k} \hodgestar d \hodgestar$. In particular, since $M$ is compact and without boundary, this implies that a differential form $\alpha$ is harmonic for the Hodge Laplacian $\Delta = dd^*+d^*d$ if and only if both $\alpha$ and $\hodgestar\alpha$ are closed.

Now, consider a pair $(X, g)$ of a nonvanishing harmonic vector field $X$ and a Riemannian metric $g$ on $M$. This means that the $1$-form $\alpha=\iota_Xg$ is harmonic for the Hodge Laplacian, where $\iota_X$ denotes the contraction of the first index by $X$. Then, the pair $(\alpha, \hodgestar \alpha)$ defines a weak cosymplectic structure. Indeed, since $\alpha$ is harmonic, both $\alpha \in \Omega^{1}(M)$ and $\hodgestar \alpha \in \Omega^{2n}(M)$ are closed, and also 
$$\alpha \wedge \hodgestar \alpha=g(X, X)\,\mu$$ 
is proportional to the Riemannian volume form $\mu$, via a positive proportionality factor.

For the converse implication, let $(\alpha, \omega)$ be a weak cosymplectic structure with Reeb vector field \( X \).
Since $\alpha \wedge \omega$ is nonzero, we have that $\ker(\omega) \cap \ker(\alpha) = 0$, so in particular $\omega$ defines a volume form on $\ker(\alpha)$.
Pick a metric on $\ker(\alpha)$ whose Riemannian volume is $\omega$, and then extend it to a metric $g$ on the whole tangent space by setting {$g(X, X)=1$ and imposing that $\ker(\alpha)$ and $X$ are orthogonal}.
Notice that, with these conditions, the Riemannian volume of $g$ is precisely $\alpha \wedge \omega$ {and that \( \alpha^{\sharp} = X \)}.



With this metric, observe that
$$
    \hodgestar \alpha = \hodgestar (1 \wedge \alpha) = \iota_{\alpha^\sharp} (\hodgestar 1) = {\iota_{{\alpha^\sharp}}} {(\alpha \wedge \omega)} = {\alpha({\alpha^{\sharp}})} \wedge \omega +  {\alpha}\wedge {\iota_{\alpha^\sharp}}\omega = \omega.
$$
This implies that $d\hodgestar \alpha = d \omega = 0$. Hence, since $d\alpha = 0$ by hypothesis, then $X = \alpha^{\sharp}$ is harmonic, concluding the proof.

\begin{remark}
	By Tischler’s theorem, the existence of a nonvanishing harmonic \(1\)-form \(\alpha\) forces \(M\) to fiber smoothly over \(\mathbb S^1\).
\end{remark}

\section{Turing Complete Dynamical Systems}\label{sec:Turing}

\subsection{Review of the Theory of Turing Machines}

A \emph{Turing machine} is a theoretical model of computation that formalizes the idea of an automatic calculation. Informally, a Turing machine is a read/write mechanism that operates on a bi-infinite tape serving as memory, where the tape is divided into discrete cells. Each cell contains a symbol from an alphabet, which can be taken to be binary. The machine is equipped with a head positioned over a single cell at any given time. In each computational step, the head reads the symbol in the current cell, and based on this symbol and its internal state, it performs three actions: it rewrites the symbol in the cell, moves one cell to the left or right, and updates its internal state. This process repeats either until a designated halting state is reached, at which point the computation terminates, or it continues indefinitely.

In formal terms, let us fix an alphabet \(\Sigma = \{0,1\}\) of binary symbols. A (binary) Turing machine is a quadruple $T=(Q, q_0, q_{\text{halt}}, \delta)$ where:
\begin{itemize}
    \item \( Q \) is a finite set of states,
    \item \( q_0 \in Q \) is the designated initial state,
    \item \( q_{\text{halt}} \in Q \), with \( q_{\text{halt}} \neq q_0 \), is the halting state,
    \item \( \delta \colon Q \times \Sigma \rightarrow Q \times \Sigma \times \{-1,0,1\} \) is the transition function.
\end{itemize}

The \emph{space of configurations} is \(Q \times \Sigma^* \), where $\Sigma^*$ is the set of two-sided sequences \( t = (t_i)_{i \in \mathbb{Z}} \) with values in \( \Sigma = \{0,1\}\) such that $t_i = 1$ for finitely many indices $i$. In this way, a configuration is a pair $c =(q, t)$ for which $q \in Q$ is the current state of the machine and $t \in \Sigma^*$ represents the state of the \emph{tape} of the machine. The initial configuration is of the form \( (q_0, t^{\text{in}}) \), where the tape $t^{\text{in}}$ contains the input data to the Turing machine.

The evolution of a Turing machine follows these rules:
\begin{enumerate}
    \item If the current configuration is of the form \((q_{\text{halt}}, t) \), the computation halts and $t$ is returned as output.
    \item Otherwise, if the configuration is $(q, t)$, the machine reads the symbol \( t_0 \) at the initial tape position and computes  
    \[
    \delta(q, t_0) = (q', t_0', \varepsilon).
    \]
    With this information, the machine writes the new symbol \( t_0' \) on the tape, shifts it by \( \varepsilon \in \{-1, 0, 1\} \) (where by convention, +1 denotes a left shift), and transitions to state \( q' \). The new configuration becomes \( (q', t') \), and the process repeats.
\end{enumerate}

This iterative process of a Turing machine $T$ defines a discrete dynamical system on the space of configurations, known as the \emph{global transition function}
\[
\Delta_T \colon Q \times \Sigma^* \rightarrow Q \times \Sigma^*,
\]
mapping a configuration to its immediate successor under the machine dynamics.

One of the foundational results in the theory of computation, proved by Alan Turing in 1936, is the \emph{undecidability of the halting problem}: there exists no algorithm that can determine whether an arbitrary Turing machine eventually reaches a configuration with halting state starting from a given input.

An important concept for our purposes is the \emph{universal Turing machine}. Notice that the information of the quadruple $T = (Q, q_0, q_{\text{halt}}, \delta)$ corresponding to a Turing machine can be encoded in binary as a finite sequence $t_T \in \Sigma^*$. Given a pair of sequences $t, t' \in\Sigma^*$, let us denote by $t*t'$ their juxtaposition.

\begin{definition}
A Turing machine \( T_U \) is said to be \emph{universal} if, for any other Turing machine \( T \) and initial tape \(t^{\text{in}} \), if we run $T_U$ on the tape $t_T * t^{\text{in}}$, then $T_U$ halts if and only if $T$ halts on $t^{\text{in}}$. Furthermore, if $T_U$ halts, the resulting tape agrees with the final tape computed by $T$ on $t^{\text{in}}$.
\end{definition}

Universal Turing machines exist and can be explicitly described (see \cite{moorebook}). As a direct consequence of the undecidability of the halting problem, any universal Turing machine admits configurations for which the halting status is true or false but not provable.

\subsection{Turing Complete Dynamical Systems}

We now introduce a formal notion of \emph{Turing completeness} for dynamical systems, capturing the idea that the system can simulate the computational behaviour of a universal Turing machine. Let \( X \) denote a dynamical system (either discrete or continuous) on a topological space \( M \).

\begin{definition}\label{TC}
A dynamical system \( X \) on \( M \) is said to be \emph{Turing complete} if there exists a universal Turing machine \( T_U \) such that for each initial configuration \( c \) of \( T_U \) and each tape $t \in \Sigma^*$, there exists a computable point \( p_{c} \in M \) and a computable open set \( U_{c,t} \subset M \) satisfying:
\[
T_U \text{ halts on } c \text{ with output $t$} \quad \Longleftrightarrow \quad \text{the positive trajectory of } X \text{ through } p_c \text{ intersects } U_{c,t}.
\]
\end{definition}

In this setting, the question of whether a Turing machine halts becomes equivalent to whether a particular trajectory in \( M \) intersects a prescribed region. Crucially, both \( p_c \) and \( U_{c,t} \) must be computable in a concrete sense: for example, if \( M \) is a smooth manifold, computability of \( p_c \) means its coordinates (in some chart) are explicitly determined from \( c \) and representable as rational numbers. Similarly, computability of \( U_{c,t} \) means it can be approximated effectively to any desired precision.

It follows that Turing complete dynamical systems necessarily exhibit undecidable behaviour: for some initial conditions, long-term dynamics are unpredictable in a rigorous computational sense.

In foundational work, Moore \cite{Mo1,moore2} constructed explicit low‐dimensional analytic vector fields on \(\R^3\), given by closed‐form differential equations inspired by idealized mechanical models, in which the flow carries out the state transitions of a universal Turing machine, thereby yielding trajectories whose reachability and long‐term behaviour are rigorously undecidable.  Building on this insight, Koiran and Moore \cite{koiran} subsequently exhibited concrete closed‐form analytic maps in one and two dimensions—using elementary analytic functions to encode tape symbols and head‐movement rules—whose iterates simulate any Turing computation, proving that even planar analytic dynamics suffice for full computational universality.  

It is worth mentioning that there exists a useful shortcut to prove Turing completeness of a smooth vector field $X$ on a $3$-dimensional manifold $M$ by means of Poincar\'e return maps. Recall that a \emph{Poincar\'e section} of $X$ is an embedded $2$-dimensional disk $D \subset M$ that is transverse to the flow of $X$ and nonwandering, i.e., for every $x \in D$ the flow of $x$ by $X$ hits $D$ again in positive time. A Poincar\'e section $D \subset M$ induces the so-called \emph{Poincar\'e return map} $D \to D$ given by the first hitting point of the flow after leaving $D$.

\begin{theorem}[{\cite[Theorem 5.2]{CMPP2}}]\label{thm:auxiliary}
    There exists an area preserving diffeomorphism $f \colon D \to D$, which is the identity on an open set of the boundary of $D$, such that, if $X$ is a vector field admiting a Poincar\'e section whose Poincar\'e return map is $f$, then $X$ is Turing complete.
\end{theorem}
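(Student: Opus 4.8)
This statement is really a construction: one must exhibit a single explicit area-preserving disk map whose forward orbits encode a universal computation, and then observe that this computational content is inherited by any flow possessing such a return map. The plan is to follow the Moore--Koiran scheme, adapted to the area-preserving (symplectic) category. I would fix once and for all a \emph{reversible} universal Turing machine $T_U=(Q,q_0,q_{\mathrm{halt}},\delta)$ with $Q=\{q_1,\dots,q_N\}$; reversibility — available by a classical theorem of Bennett — guarantees that the global transition function $\Delta_{T_U}$ is injective, which is exactly what allows it to be realized by a diffeomorphism rather than merely by a continuous surjection. Then encode a configuration $(q,t)$ with $t=(t_i)_{i\in\Z}$ by splitting the tape into one-sided words $t^+=(t_0,t_1,\dots)$ and $t^-=(t_{-1},t_{-2},\dots)$ and sending each to the point $\sum_{j\ge0}2\,t^{\pm}_j\,3^{-(j+1)}$ of the middle-thirds Cantor set $C\subset[0,1]$; the state $q_i$ is recorded by placing the resulting pair $(x,y)\in C\times C$ into the $i$-th of $N$ pairwise disjoint closed squares $S_1,\dots,S_N$ in the interior of $D$. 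Since $t$ has finite support, the encoded point has rational coordinates depending effectively on $(q,t)$, hence is computable in the sense of \cref{TC}.

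\textbf{The transition as area-preserving affine pieces.} Reading $t_0$ amounts to testing whether $x\in[0,\tfrac13]$ (symbol $0$) or $x\in[\tfrac23,1]$ (symbol $1$), a clopen partition of $C$. For a pair $(q,s)$ with $\delta(q,s)=(q',s',\varepsilon)$, the transition overwrites the leading digit of $t^+$ by $s'$ and then shifts by $\varepsilon$; in the $(x,y)$ coordinates a left shift sends $x\mapsto 3x-2t_0$ (drop the leading digit) and $y\mapsto \tfrac13 y+\tfrac23 s'$ (prepend the digit $s'$), a map with Jacobian $\operatorname{diag}(3,\tfrac13)$ of determinant $1$; the cases $\varepsilon=-1$ and $\varepsilon=0$ are handled symmetrically with translations and the inverse squeeze. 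Thus each transition rule is implemented by an \emph{area-preserving} affine map $\phi_{q,s}$ carrying a sub-rectangle $R_{q,s}\subset S_q$ onto a sub-rectangle $R'_{q,s}\subset S_{q'}$ — a relocated, rescaled Baker map — and the finite collection $\{\phi_{q,s}\}$ realizes $\Delta_{T_U}$ on $\bigsqcup_i(C\times C)$.

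\textbf{Assembling one smooth diffeomorphism.} Using injectivity of $\Delta_{T_U}$ one can choose the encoding (enlarging the squares, or inserting auxiliary marker digits, as needed) so that all source rectangles $R_{q,s}$ and all target rectangles $R'_{q,s}$ are pairwise disjoint and compactly contained in $\operatorname{int} D$. Each $\phi_{q,s}$ factors as a linear squeeze $\operatorname{diag}(3,\tfrac13)$ followed by a rotation and a translation, and each such factor is the time-one map of a compactly supported Hamiltonian isotopy of $D$; concatenating these isotopies inside pairwise disjoint ``working regions'' $W_{q,s}$ extends $\phi_{q,s}$ to an area-preserving diffeomorphism of $D$ supported in $W_{q,s}$ and agreeing with $\phi_{q,s}$ on $R_{q,s}$. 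The composition $f$ of these finitely many diffeomorphisms is then a single area-preserving $f\colon D\to D$ that equals the identity near $\partial D$ (in particular on an open arc of $\partial D$) and on the halting square $S_{q_{\mathrm{halt}}}$, and whose restriction to $\bigsqcup_i(C\times C)$ is exactly $\Delta_{T_U}$; since the Cantor pieces are closed and nowhere dense, the smoothing needed to pass from the piecewise-affine description to a genuine $C^\infty$ map is carried out off them, without disturbing the encoded dynamics.

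\textbf{Transfer to the flow, and the main obstacle.} Finally, let $X$ be a vector field with a Poincaré section $D$ whose return map is this $f$. For an initial configuration $c$ of $T_U$ set $p_c\in D$ to be its encoded point, and for $t\in\Sigma^*$ let $U_{c,t}$ be a small ball around the encoding of $(q_{\mathrm{halt}},t)$ inside $S_{q_{\mathrm{halt}}}$; both are computable. Because $D$ is a Poincaré section, the forward $X$-trajectory through $p_c$ meets $D$ precisely along the forward $f$-orbit of $p_c$, and since halting configurations are fixed by $f$, this orbit enters $U_{c,t}$ if and only if $T_U$ halts on $c$ with output $t$ — which is exactly \cref{TC}. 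The delicate point of the whole argument is the geometric bookkeeping in the assembling step: arranging the Cantor encoding and the working regions so that the finitely many Baker pieces are implemented \emph{simultaneously and exactly}, with pairwise disjoint supports, while the global map remains area-preserving and becomes the identity near the boundary. The reversibility of $T_U$, the compactly supported area-preserving extension of affine maps in dimension two, and the smoothing off a Cantor set are each standard, but they must be orchestrated together, and that orchestration is where the real work lies.
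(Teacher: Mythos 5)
The paper does not prove this statement at all: it is imported verbatim from \cite{CMPP2} (Theorem 5.2 there), and your proposal is essentially a reconstruction of the proof given in that reference — encode the two tape halves of a reversible universal machine as points of a square Cantor set, realize the global transition function as finitely many determinant-one affine maps between disjoint blocks, extend to a single area-preserving diffeomorphism of $D$ that is the identity near $\partial D$, and transfer halting to orbit-reachability through the Poincar\'e section. So in outline you have the right argument, and you have correctly located the delicate step.

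Two caveats on that delicate step and on the endgame. First, the extension: you propose to extend each affine piece $\phi_{q,s}$ inside \emph{pairwise disjoint} working regions $W_{q,s}$ and then compose. As literally stated this is not obviously achievable (a region implementing $\phi_{q,s}$ must contain both $R_{q,s}$ and $R'_{q,s}$, and the required tubes may be forced to cross or to pass through other blocks); the argument in \cite{CMPP2} instead connects the \emph{disjoint union} of all block maps to the inclusion through a single path of area-preserving embeddings and invokes the (Hamiltonian) isotopy extension theorem to produce one compactly supported diffeomorphism, which sidesteps the routing problem entirely. Your sequential-composition variant can be repaired (an extension supported near one tube can be chosen to fix all the other source and target blocks, and one composes in order), but that is precisely the bookkeeping you defer, and it should be done via isotopy extension rather than by asserting disjoint supports. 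Second, taking $U_{c,t}$ to be a metric ball around the encoding of $(q_{\mathrm{halt}},t)$ does not give the exact equivalence of \cref{TC}: encodings of tapes $t'$ agreeing with $t$ except at a far-away cell accumulate on the encoding of $t$, so any ball produces false positives if the machine halts with such a $t'$. The standard fix (used in \cite{CMPP2}) is to take $U_{c,t}$ to be an open set whose intersection with the square Cantor set is the clopen cylinder determined by the finite support of $t$, together with a halting convention ensuring the head returns to a normal form before halting.
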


\section{Proof of the main theorem - Theorem~\ref{T:main}}
\label{sec:NavierStokesApplication}

\subsection{Harmonic vector fields as stationary solutions to the Navier-Stokes equations}

The following result is standard, but we include a proof for the sake of completeness.

 \begin{proposition}\label{prop:harmonic-NS}
Let \((M,g)\) be a Riemannian \(3\)-manifold and let $X\in\mathfrak{X}(M)$
be a harmonic vector field.
Then $X$ is a stationary solution of the incompressible Navier-Stokes equations
for an appropriate choice of pressure \(p\).
\end{proposition}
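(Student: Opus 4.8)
The plan is to verify directly that a harmonic vector field $X$ satisfies the stationary Navier-Stokes system
$$\nabla_X X - \nu\,\Delta X = -\nabla p, \qquad \operatorname{div} X = 0,$$
for a suitable pressure $p$, and crucially that this holds \emph{independently of $\nu$}. The incompressibility condition is immediate: writing $\alpha = X^\flat$, harmonicity gives $d^*\alpha = 0$, and since $\operatorname{div} X = -d^*\alpha$ (up to the standard sign convention), we get $\operatorname{div} X = 0$. The viscous term also vanishes: by definition $\Delta X = (\Delta \alpha)^\sharp$ where $\Delta = dd^* + d^*d$ is the Hodge Laplacian, and $\alpha$ being harmonic means precisely $\Delta\alpha = 0$, so $\Delta X = 0$. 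Hence the $\nu$-dependent term drops out entirely, which is why the conclusion is uniform in $\nu \geq 0$.

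It remains to show that the nonlinear advection term $\nabla_X X$ is a gradient. Here I would invoke the standard vector-calculus identity on a Riemannian $3$-manifold relating covariant advection to the exterior derivative: for any vector field $X$,
$$\big(\nabla_X X\big)^\flat = \Liederivative{X}\alpha - \tfrac12\, d\big(g(X,X)\big) = \contractwith{X} d\alpha + \tfrac12\, d\big(g(X,X)\big),$$
using Cartan's formula $\Liederivative{X}\alpha = \contractwith{X}d\alpha + d\,\contractwith{X}\alpha$ together with $\contractwith{X}\alpha = g(X,X)$. Since $\alpha$ is harmonic we have $d\alpha = 0$, so $\contractwith{X}d\alpha = 0$ and therefore
$$\big(\nabla_X X\big)^\flat = \tfrac12\, d\big(g(X,X)\big).$$
Setting $p := -\tfrac12\, g(X,X) = -\tfrac12\,|X|^2$ gives $\nabla_X X = -\nabla p$, and combining with $\Delta X = 0$ and $\operatorname{div} X = 0$ shows that $(X,p)$ solves the stationary Navier-Stokes equations.

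The only real subtlety — not so much an obstacle as a point requiring care — is the sign and normalization conventions: the precise form of the identity $(\nabla_X X)^\flat = \contractwith{X}d\alpha + \tfrac12 d|X|^2$ is standard but depends on the convention for the Hodge star and the metric contraction, and one should confirm that the $d^*\alpha = 0$ condition is genuinely what incompressibility demands under the conventions fixed earlier in the paper (where $\alpha = \contractwith{X}g$ and $d^* = (-1)^k \hodgestar d\hodgestar$). Once these bookkeeping matters are settled, the proof is a two-line computation; the conceptual content is simply that harmonicity kills both the viscous term and the vorticity, leaving a pure Bernoulli pressure.
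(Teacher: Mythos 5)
Your proof is correct and follows essentially the same route as the paper: harmonicity kills the viscous term and gives incompressibility, and $d\alpha=0$ reduces the advection term to the Bernoulli gradient $\nabla\bigl(\tfrac12|X|^2\bigr)$. The only (cosmetic) difference is that you derive $(\nabla_X X)^\flat=\tfrac12\,d|X|^2$ via the Lie-derivative identity and Cartan's formula, whereas the paper obtains it from the symmetry $g(\nabla_Y X,Z)=g(\nabla_Z X,Y)$ implied by $d\alpha=0$; these are two standard forms of the same computation.
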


\begin{proof}
Let us denote $\alpha = X^\flat = g(X, -)$. Since \(d \alpha= d^*\alpha =0\), we infer that
\[
\Delta X = [(d\,d^* + d^*\,d)\,\alpha]^\sharp = 0.
\]
Furthermore, notice that \(d^*\alpha=0\) also imples \(\operatorname{div} X=0\).

Plugging $X$ into the stationary Navier-Stokes equations, we obtain that it must satisfy
\[
\nabla_X X = -\,\nabla p.
\]
Finally, to show that \(\nabla_X X\) is a gradient, we note that \(d\alpha=0\) implies the following symmetry
\[
g\bigl(\nabla_Y X,\;Z\bigr)
=
g\bigl(\nabla_Z X,\;Y\bigr)
\quad\forall\,Y,Z.
\]
Hence for any \(Y\),
\[
Y\left(\tfrac12\,|X|^2\right)
= g\bigl(\nabla_Y X,\;X\bigr)
= g\bigl(\nabla_X X,\;Y\bigr),
\]
which shows
\(\nabla_X X = \nabla\bigl(\tfrac12\,|X|^2\bigr)\). Setting
\[
p = -\,\tfrac12\,|X|^2 + \text{constant}
\]
then yields \(\nabla_X X = -\,\nabla p\), which proves that \(X\) is a stationary solution of the Navier-Stokes equations.
\end{proof}

\subsection{Embedding dynamics}
Throughout this section, let is fix an area preserving diffeomorphism \( f \colon D \to D \) of the $2$-dimensional disk which is equal to the identity near the boundary.

\begin{lemma}\label{L:beta}
	There exists a \( 2 \)-form \( \beta \) on \( T = D \times \mathbb S^1 \) such that \( (c \dif t, \beta) \) is a cosymplectic structure whose time-\( c \) return map to \( D \times \{ 0 \} \) is \( f \).
	The form \( \beta \) is equal to \( \dif x \wedge \dif y \) near the boundary of the disk.
	\label{thm:torus-cosymplectic-structures-with-given-retur-map}
\end{lemma}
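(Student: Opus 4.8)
The plan is to build $\beta$ by using the mapping torus structure of $T = D\times\mathbb S^1$ and the fact that $f$ is the identity near $\partial D$. Think of the $\mathbb S^1$ factor as $\mathbb R/c\mathbb Z$ with coordinate $t$, so that $c\dif t$ is a well-defined closed $1$-form whose Reeb field is $\tfrac1c\,\partial_t$; its time-$c$ return map to $D\times\{0\}$ is the identity. We want to modify this so the return map becomes $f$ while keeping $\alpha = c\dif t$ fixed. Since $f$ is area-preserving and isotopic to the identity rel a neighborhood of $\partial D$ (as $f$ is the identity there and $D$ is a disk, so $\mathrm{Symp}_c(D)$ is connected — this is classical, e.g. via the flux/Moser argument on the disk), choose a smooth isotopy $\{f_s\}_{s\in[0,1]}$ with $f_0 = \mathrm{id}$, $f_1 = f$, each $f_s$ area-preserving and equal to the identity near $\partial D$. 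Differentiating gives a time-dependent vector field $V_s$ on $D$, divergence-free with respect to $\dif x\wedge\dif y$ and vanishing near $\partial D$, generating the isotopy.

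Next I would use the suspension/mapping-torus construction. On $D\times[0,c]$ consider the vector field $W = \partial_t + \chi(t)\,V_{s(t)}$, where $s(t)$ is a smooth reparametrization of $[0,c]$ onto $[0,1]$ that is constant ($0$ near $t=0$, $1$ near $t=c$) so the construction glues smoothly, and $\chi$ is the corresponding cutoff; arrange that the time-$c$ flow of $W$ from $D\times\{0\}$ to $D\times\{c\}\cong D\times\{0\}$ equals $f$. The $2$-form $\beta$ is then defined as $\beta := \iota_W\big(\dif t\wedge\dif x\wedge\dif y\big)$, or equivalently as the unique $2$-form with $\iota_W\beta = 0$ and $\iota_W(c\,\dif t\wedge\text{something})$... more cleanly: set $\beta$ to be the pullback of $\dif x\wedge\dif y$ under the map $(q,t)\mapsto (\phi^W_{-t}(q), t)$ composed appropriately, i.e. transport the area form $\dif x\wedge\dif y$ along the flow of $W$. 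Concretely, let $\psi_t\colon D\to D$ be the flow of $W$ (projected to $D$) from time $0$ to time $t$, so $\psi_0=\mathrm{id}$, $\psi_c = f$; then define $\beta$ on $D\times\{t\}$ to be $(\psi_t^{-1})^*(\dif x\wedge\dif y)$ plus a $\dif t$-component chosen to make $\beta$ closed. Since each $\psi_t$ is area-preserving, $(\psi_t^{-1})^*(\dif x\wedge\dif y) = \dif x\wedge\dif y$, so in fact the "horizontal" part is just $\dif x\wedge\dif y$; what varies is that the Reeb field of $(c\dif t,\beta)$ must be $W$, not $\partial_t$. So I would instead \emph{define} $\beta$ directly by the requirements $\iota_W\beta = 0$, $\iota_{\partial_t}\beta|_{\text{fibers}} $ determined, and $\mathrm{d}\beta=0$; concretely $\beta = \dif x\wedge\dif y - \dif t\wedge\big(\iota_{V}\!(\dif x\wedge \dif y)\big)$ where $V = \chi(t) V_{s(t)}$ is the suspended vector field, so that $\beta = \dif x\wedge\dif y - \dif t\wedge \gamma_t$ with $\gamma_t := \iota_{V}(\dif x\wedge\dif y)$ a $1$-form on $D$. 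Then $c\dif t\wedge\beta = c\,\dif t\wedge\dif x\wedge\dif y$ is a volume form, $\iota_{W}\beta = \iota_{\partial_t}\beta + \iota_V\beta = -\gamma_t + \iota_V(\dif x\wedge\dif y) - \dif t(V)\gamma_t = 0$ (since $\iota_{\partial_t}(\dif x\wedge\dif y)=0$ and $\iota_{\partial_t}(\dif t\wedge\gamma_t) = \gamma_t$), and $(c\dif t)(W) = c$, so $\tfrac1c W$ is the Reeb field and its time-$c$ return map is $f$ by construction.

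It remains to check $\dif\beta = 0$: one computes $\dif\beta = -\dif t\wedge \dif_D\gamma_t$ where $\dif_D$ is the fiberwise differential, and $\dif_D\gamma_t = \dif_D\iota_{V}(\dif x\wedge\dif y) = (\divergence_{\dif x\wedge\dif y} V)\,\dif x\wedge\dif y = 0$ because $V_{s(t)}$ is area-preserving for every $s$ and $\chi(t)$ is a function of $t$ alone. (Here the terms involving $\partial_t\gamma_t$ land in $\dif t\wedge\dif t\wedge(\cdots) = 0$.) Finally, near $\partial D$ the isotopy $f_s$ is the identity, hence $V\equiv 0$ there, so $\gamma_t\equiv 0$ and $\beta = \dif x\wedge\dif y$ near the boundary, as required; closedness and the cosymplectic condition are likewise automatic there. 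The main obstacle in this argument is the input that $f$ is isotopic to the identity through area-preserving diffeomorphisms that are the identity near $\partial D$ — i.e. the connectedness of $\mathrm{Symp}_c(D,\partial D)$; this is standard for the $2$-disk (the symplectic/area flux vanishes on a disk and a Moser-type argument applies), but it must be invoked carefully, and one should make sure the reparametrization cutoffs are arranged so the suspended field $V$ is genuinely smooth and $c$-periodic in $t$.
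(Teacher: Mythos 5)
Your construction is essentially the paper's: both arguments hinge on the fact that \( f \) is isotopic to the identity through area-preserving diffeomorphisms of \( D \) that are the identity near \( \partial D \), and both realize \( f \) as the return map of a suspension of that isotopy. The paper forms the mapping torus \( M_f = (D\times\R)/(p,t+1)\sim(f(p),t) \) with the product structure \( (c\dif t, \dif x\wedge\dif y) \) and pulls it back to \( D\times\mathbb S^1 \) under the straightening \( F(p,t)=(f_t^{-1}(p),t) \); your explicit formula \( \beta = \dif x\wedge\dif y - \dif t\wedge\iota_V(\dif x\wedge\dif y) \) is exactly the infinitesimal version of that pullback, and your verifications of closedness, of \( c\dif t\wedge\beta>0 \), of \( \iota_W\beta=0 \), and of \( \beta=\dif x\wedge\dif y \) near \( \partial D \) are all correct. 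The one slip is a normalization: with your convention \( \mathbb S^1 = \R/c\Z \), the Reeb field of \( (c\dif t,\beta) \) is \( W/c \), whose time-\( c \) flow equals the time-\( 1 \) flow of \( W \) and so advances \( t \) only by \( 1 \), not once around the circle; consequently neither your claim that the unperturbed return map is the identity nor the final claim that the return map is \( f \) follows as written. The fix is simply to take \( \mathbb S^1=\R/\Z \) (as the paper implicitly does) and arrange the time-\( 1 \) flow of \( W \) to project to \( f \); then the time-\( c \) flow of the Reeb field \( W/c \) is one full loop with return map \( f \), and the proof is complete.
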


\begin{proof}
	Since \( f \) is area-preserving, we can form the cosymplectic manifold \(M_f = (D \times \mathbb{R})/(p, t+1)\sim(f(p), t)\) with cosymplectic structure \( (c \dif t, \dif x \wedge \dif y) \).
	The time-\( c \) return map of the Reeb vector field to the disk \( D \times \{ 0 \} \) is clearly~\( f \).

	Since \( f \) is isotopic to the identity, the manifold $M_f$ is diffeomorphic to \( D \times \mathbb S^1 \) by a diffeomorphism which is the identity near the boundary of the torus.
	Explicitly, if \( f_t \) is the isotopy with \( f_0 = f \) and \( f_1 = \identity \), the diffeomorphism is given by \( F(p, t) = (f_t^{-1}(p), t) \).

	We can now pull back the cosymplectic structure \( (c\dif t, \dif x \wedge \dif y) \) on $M_f$ to \( D \times \mathbb S^1 \) by \( F^{-1} \) to get a cosymplectic structure
	\( (c \dif t, \beta) \) on \( D \times \mathbb S^1\).
	Since \( f \) is the identity near the boundary, \( \beta = \dif x \wedge \dif y \) near the boundary.
\end{proof}

\begin{proposition}
	Let \( (M, \alpha, \beta) \) be a compact cosymplectic $3$-dimensional manifold.
	Then there exists a \( 2 \)-form \( \tilde{\beta} \), an embedded solid torus \( T = D \times \mathbb S^1 \subset M \), a smaller disk \( D_{0} \subset D \), and a constant \( c > 0 \) such that
	\begin{itemize}
		\item \( (\alpha, \tilde{\beta}) \) is a cosymplectic structure on \( M \).
		\item \( \tilde{\beta} \) is equal to \( \beta \) on the complement of \( T \).
		\item The time-\( c \) return map to the disk \( D_{0} \times \{ 0 \} \subset T \) of the Reeb vector field of the cosymplectic structure \( (\alpha, \tilde{\beta}) \) is equal to \( f \).
	\end{itemize}
	Furthermore, if \( (M, \alpha, \beta) \) is equipped with a metric \( g \) such that \( \beta = \hodgestar_{g}\alpha \), there exists a metric \( \tilde{g} \) such that \( \tilde\beta = \hodgestar_{\tilde g}\alpha \) and satisfying that \( g = \tilde{g} \) on the complement of \( T \).
	\label{thm:embed-diffeomorphism-free-cosymplectic-structure}
\end{proposition}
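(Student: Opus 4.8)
The idea is to transplant the local model of Lemma~\ref{L:beta} into $M$ by altering $\beta$ only inside an embedded solid torus on which $\alpha$ has been straightened, and then to build the compatible metric by an elementary obstruction argument.

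\emph{Step 1 (normalizing $\alpha$).} Since $\alpha$ is a nowhere vanishing closed $1$-form, $[\alpha]\ne 0$ in $H^1(M;\R)$, so there is an embedded loop $\gamma\subset M$ with $\int_\gamma\alpha=c>0$, which after a standard perturbation (for instance using a Tischler fibration $M\to\mathbb S^1$ adapted to $\alpha$) may be taken positively transverse to $\ker\alpha$. On a tubular neighbourhood of $\gamma=\{0\}\times\mathbb S^1$ one has $[\alpha]=c\,[\dif t]\in H^1(D\times\mathbb S^1)=\R$, hence $\alpha=c\,\dif t+\dif h$; the fibre-preserving shear $(x,y,t)\mapsto(x,y,t+h/c)$ is a diffeomorphism wherever $1+\partial_t h/c>0$, which holds along $\gamma$ by positive transversality and therefore on a thinner tube. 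After this change of coordinates we obtain an embedded solid torus $T=D\times\mathbb S^1\subset M$ with $\alpha|_T=c\,\dif t$.

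\emph{Step 2 (installing the return map).} The support of $f$ is a compact subset of $\operatorname{int}D$; fix nested disks $\operatorname{supp}(f)\subset\operatorname{int}D_0\subset D_0\subset D_2\subsetneq D$ and apply Lemma~\ref{L:beta} to the disk $D_0$ and the diffeomorphism $f|_{D_0}$, obtaining a $2$-form $\beta_f$ on $D_0\times\mathbb S^1$ with $(c\,\dif t,\beta_f)$ cosymplectic, $\beta_f=\dif x\wedge\dif y$ near $\partial D_0$, and whose Reeb field has time-$c$ return map $f$ to $D_0\times\{0\}$. Define $\tilde\beta:=\beta$ on $M\setminus(D_2\times\mathbb S^1)$ and $\tilde\beta:=\beta_f$ on $D_0\times\mathbb S^1$, and interpolate on the collar $A:=(D_2\setminus D_0)\times\mathbb S^1$. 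The content of the step is to perform this interpolation so that $\tilde\beta$ is closed \emph{and} $\alpha\wedge\tilde\beta>0$ while matching the prescribed germs $\dif x\wedge\dif y$ and $\beta$ at the two ends of $A$: the cohomology classes of these germs agree (both vanish, because every torus $\{x^2+y^2=r^2\}\times\mathbb S^1\subset A$ bounds a solid torus in $M$, so it pairs to zero with $[\beta]$, and $\dif x\wedge\dif y$ pulls back to zero on it), and the closed $2$-forms $\omega$ on $A$ with $c\,\dif t\wedge\omega>0$ form an open set that is convex in each cohomology class; a relative Moser-type argument then produces the required $\tilde\beta$. Now $(\alpha,\tilde\beta)$ is cosymplectic on $M$, equals $(\alpha,\beta)$ off $T$, and on $D_0\times\mathbb S^1$ it is exactly the model $(c\,\dif t,\beta_f)$; since near $\partial D_0$ the Reeb field is $c^{-1}\partial_t$, tangent to the torus $\partial D_0\times\mathbb S^1$, the disk $D_0\times\{0\}$ is a Poincaré section of the Reeb field of $(\alpha,\tilde\beta)$ whose time-$c$ return map is $f$.

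\emph{Step 3 (the metric).} Assume $\hodgestar_g\alpha=\beta$. Running the converse construction of Lemma~\ref{thm:mainCorrespondenceResult}, one checks that a metric $h$ satisfies $\hodgestar_h\alpha=\tilde\beta$ precisely when $(\ker\alpha)^{\perp_h}=\ker\tilde\beta$ and $|\alpha|_h\cdot\operatorname{vol}(h|_{\ker\alpha})=\tilde\beta|_{\ker\alpha}$; the remaining data — the positive function $|\alpha|_h$ and the conformal class of $h|_{\ker\alpha}$ — are free, so the $\tilde\beta$-compatible metrics form a nonempty fibre bundle over $M$ with contractible fibres. Off $D_2\times\mathbb S^1$ we have $\tilde\beta=\beta$, so $g$ is a section of this bundle there and on a collar of $\partial(D_2\times\mathbb S^1)$; extending the section over $D_2\times\mathbb S^1$ (no obstruction, as the fibres are contractible) gives a metric $\tilde g$ with $\hodgestar_{\tilde g}\alpha=\tilde\beta$ and $\tilde g=g$ on $M\setminus T$. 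Since the Reeb field of $(\alpha,\tilde\beta)$ depends only on $\alpha$ and $\tilde\beta$, this step does not interfere with Step 2.

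The main obstacle is the collar interpolation in Step 2: producing a closed, nondegenerate $2$-form on the thickened torus $A$ that restricts to the flat model $\dif x\wedge\dif y$ on one boundary component and to the ambient $\beta$ on the other. This is a relative Moser problem, and the delicate point is maintaining both closedness and the positivity $\alpha\wedge\tilde\beta>0$ throughout the collar while respecting the two boundary germs.
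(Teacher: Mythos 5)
Your Steps 1 and 3 essentially reproduce the paper's argument: the straightened tube with \( \alpha|_T = c\,\dif t \) is obtained exactly as in Lemma~\ref{thm:cosymplectic-flow-torus} (the paper gets the transverse circle from the fact that \( \alpha \) is intrinsically harmonic, via Farber's theorem, rather than from a Tischler fibration, but the shear normalization is the same), and your description of the compatible metrics as sections of a bundle with contractible fibres is a slightly more explicit version of the paper's gluing argument for \( \tilde g \).

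The genuine gap is in Step 2, and you have in fact pointed at it yourself: the collar interpolation is asserted, not proved. Convexity of the cone \( \{\omega \text{ closed} : c\,\dif t \wedge \omega > 0\} \) within a fixed cohomology class gives you a \emph{path} \( (1-s)\omega_0 + s\omega_1 \) of admissible forms on \( A \); it does not give a single closed form on \( A \) equal to \( \omega_0 \) near one boundary torus and \( \omega_1 \) near the other. Substituting \( s = \chi(r) \) destroys closedness, and the standard repair --- writing \( \omega_1 - \omega_0 = \dif\lambda \) (legitimate, since as you note both classes vanish on \( A \)) and taking \( \omega_0 + \dif(\chi(r)\lambda) \) --- reintroduces the positivity problem through the cross term \( \chi'(r)\,\dif r \wedge \lambda \), whose \( \dif r \wedge \dif\theta \)-component \( \chi'(r)\lambda_\theta \) has no a priori sign. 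A ``relative Moser argument'' does not dispose of this either: Moser's method produces an isotopy carrying \( \omega_0 \) to \( \omega_1 \), not an interpolating form matching two prescribed boundary germs, and the isotopy will not be the identity at the ends. The paper closes exactly this hole by a normalization of the primitive: it replaces \( \eta \) by \( \eta - k\,\dif\theta \) with \( k = \min \eta_\theta \), so that the offending term becomes \( c\,\rho'(r)(\eta_\theta - k)\,\dif r \wedge \dif\theta \wedge \dif t \geq 0 \) for a monotone radial cutoff \( \rho \), and arranges the two summands \( \dif(\rho(\eta - k\,\dif\theta)) \) and \( \beta' \) to be nonnegative against \( \alpha \) with disjoint degeneracy loci. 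Without this (or an equivalent) normalization of \( \lambda \) by a closed \( 1 \)-form, your Step 2 does not go through, so the central construction of the proposition remains unproven in your write-up.
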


\begin{remark}
	The assumption that \( M \) is compact in \cref{thm:embed-diffeomorphism-free-cosymplectic-structure} can be weakened.
	Indeed, the compactness is only used in the proof of \cref{thm:cosymplectic-flow-torus} to obtain a circle transverse to \( \ker\alpha \).
	It is thus sufficient to assume that such a transverse circle exists. When \( M \) is compact any point has such a circle passing through it, so in this case one can choose \( T \) to pass through any given point.
	\label{rem:taut-sufficient-M-compact-not-necessary}
\end{remark}

We will need the following \namecref{thm:cosymplectic-flow-torus} for the proof of \Cref{thm:embed-diffeomorphism-free-cosymplectic-structure}.

\begin{lemma}
	Let \( (M, \alpha, \beta) \) be a compact cosymplectic $3$-dimensional manifold.
	Then there exists an embedded solid torus \( \iota \colon T = D \times \mathbb S^1 \to M \) such that \( \iota^* \alpha = c \dif t\) for some constant \( c \neq 0 \).
	\label{thm:cosymplectic-flow-torus}
\end{lemma}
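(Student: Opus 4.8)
The plan is to produce the solid torus as a tubular neighborhood of a suitably chosen embedded circle. Since $(M,\alpha,\beta)$ is cosymplectic, $\alpha$ is a nonvanishing closed $1$-form, so its kernel defines a codimension-one foliation $\mathcal{F}$ of $M$, and the Reeb field $R$ (with $\alpha(R)=1$, $\iota_R\beta=0$) is transverse to $\mathcal{F}$ and everywhere nonzero. First I would use compactness to find a closed orbit-like object: more precisely, I want an embedded circle $\gamma$ that is everywhere transverse to $\ker\alpha$, equivalently along which $\alpha$ never vanishes when restricted to $T\gamma$. The natural candidate is a periodic orbit of the Reeb flow, but periodicity is not guaranteed; instead, since $[\alpha]\in H^1(M;\mathbb{R})$ is nonzero (cosymplecticity forces $b_1(M)>0$), one can take a smooth closed curve $\gamma$ with $\int_\gamma\alpha\neq 0$ and, after a small perturbation supported transverse to the flow, arrange that $\gamma$ is everywhere transverse to $\ker\alpha$ — this is an open, dense condition since $\ker\alpha$ is a hyperplane distribution. (Alternatively, as hinted in Remark~\ref{rem:taut-sufficient-M-compact-not-necessary}, through any point there is such a transverse circle when $M$ is compact, e.g.\ by following the Reeb flow until it nearly returns and closing up.)

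Next I would build the solid torus as a thin tube around $\gamma$. Choose a small embedded $2$-disk $D_0$ through a point of $\gamma$ transverse to $\gamma$; since $\gamma$ is transverse to $\ker\alpha$, after shrinking $D_0$ we may assume $D_0\subset$ a leaf-like hypersurface, i.e.\ $\alpha|_{TD_0}=0$, so $D_0$ is transverse to $R$. Flowing $D_0$ along a nonvanishing vector field that is positively transverse to $\ker\alpha$ and tangent to $\gamma$ (for instance $R$ itself, or $\gamma$'s tangent extended) for a short time sweeps out an embedded solid torus $T=D\times\mathbb{S}^1$ with $\gamma=\{0\}\times\mathbb{S}^1$; embeddedness holds for a sufficiently thin disk by the tubular neighborhood theorem together with transversality of $\gamma$ to the foliation.

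The remaining point is to normalize the pullback of $\alpha$ to be exactly $c\,\dif t$. On $T$, $\iota^*\alpha$ is a closed $1$-form that is nowhere zero on the $\mathbb{S}^1$-fibers $\{p\}\times\mathbb{S}^1$ (by transversality), and whose cohomology class in $H^1(T;\mathbb{R})\cong\mathbb{R}$ is the nonzero constant $c:=\int_\gamma\alpha$. Writing $\iota^*\alpha=c\,\dif t+dh$ for some function $h$ on $T$ (using that $H^1(T)$ is generated by $\dif t$), I would then change the product structure on $T$: define a new fibration by using the "time" coordinate $s$ for which $ds=\iota^*\alpha/c$ — concretely, reparametrize each $\mathbb{S}^1$-fiber by the flow of the Reeb field (or of $\partial_t$ rescaled), so that in the new coordinates $\iota^*\alpha=c\,\dif s$. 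This is the step that requires the most care: one must check that the new coordinate $s$ is globally well-defined on $T\cong D\times\mathbb{S}^1$ (the monodromy is exactly $c$ by construction, so $s$ descends to an $\mathbb{S}^1$-valued function with constant $\dif s$) and that the resulting map $D\times\mathbb{S}^1\to T$ is a diffeomorphism preserving the boundary. Once this is done, relabeling gives $\iota^*\alpha=c\,\dif t$ with $c\neq 0$, completing the proof.

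The main obstacle I anticipate is the passage from "a circle with $\int_\gamma\alpha\neq 0$" to "a circle transverse to $\ker\alpha$ admitting a tubular neighborhood on which $\alpha$ pulls back to $c\,\dif t$": the transversality perturbation is routine, but verifying that the tube stays embedded and that the coordinate straightening $\iota^*\alpha\mapsto c\,\dif t$ can be carried out without destroying the product structure near $\partial D$ needs the isotopy/Moser-type argument to be set up carefully. Everything else is standard differential topology (tubular neighborhoods, de Rham representatives on $D\times\mathbb{S}^1$).
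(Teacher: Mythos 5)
Your overall architecture matches the paper's: find an embedded circle transverse to $\ker\alpha$, thicken it to a solid torus $D\times\mathbb S^1$ whose $\mathbb S^1$-fibres are transverse to $\ker\alpha$, write $\iota^*\alpha = c\,\dif t + \dif h$ using $H^1(T)\cong\R$, and straighten by the fibrewise shear $t\mapsto t+c^{-1}h(q,t)$ --- this last map is exactly the diffeomorphism $G$ in the paper's proof, and your discussion of its monodromy and of why it preserves the product structure is essentially the same verification the paper carries out.

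The gap is in the very first step: the existence of an embedded circle everywhere transverse to $\ker\alpha$. Your justification --- take a loop with $\int_\gamma\alpha\neq 0$ and apply a small perturbation, since transversality to a hyperplane distribution is ``an open, dense condition'' --- does not work: transversality of a loop to a hyperplane field is open but not dense. Concretely, on $\T^3$ with $\alpha=\dif z$, a loop with $\int_\gamma \dif z=1$ along which $z$ is not monotone has tangencies to $\ker\alpha$ that survive every $C^0$-small (hence every $C^1$-small) perturbation: if $z$ decreases by a definite amount over some subarc, it still does so after a small perturbation, so a tangency persists by the mean value theorem. Removing the tangencies requires a non-local isotopy, and producing an \emph{embedded} transverse loop is genuinely nontrivial --- it is essentially the tautness of the foliation $\ker\alpha$. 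This is precisely where the paper invokes Lemma~\ref{thm:mainCorrespondenceResult} to see that $\alpha$ is intrinsically harmonic and then cites \cite[Thm.~9.11]{farberTopologyClosedOneForms2004}, which yields an embedded circle transverse to $\ker\alpha$ through any point of a compact manifold carrying such a form. Your parenthetical alternative (follow the Reeb flow until it nearly returns and close up) is the right intuition, but making the closing arc transverse and the resulting loop embedded is exactly the content of that theorem, not a routine step. With that input substituted for the perturbation claim, the rest of your argument goes through.
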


\begin{proof}
	By \Cref{thm:mainCorrespondenceResult}, the $1$-form \( \alpha \) is intrinsically harmonic, i.e., it is harmonic for some Riemannian metric.
	Since \( M \) is compact, this implies, by \cite[Thm.~9.11]{farberTopologyClosedOneForms2004}, that there exists an embedded \( \mathbb S^1 \) which is everywhere transverse to \( \ker\alpha \).
	Actually, that Theorem implies that such a circle exists through any given point.
	Since transversality is an open condition and \( \mathbb S^1 \) is compact we can choose a small tubular neighbourhood \( T = D \times \mathbb S^1 \) on which the circle \( \{ q \} \times \mathbb S^1 \) is positively transverse to \( \ker\alpha \) for each $q\in D$.

	Now, the cohomology group \( H^1(T) \) is one-dimensional and generated by the form \( \dif t \) where \( t \) is the \( \mathbb S^1 \) coordinate, so we can write (the restriction of) \( \alpha \) as \( c \dif t + \dif g \) for some function \( g \colon T \to \mathbb{R} \).

	We claim that the map \( G(q, t) = (q, t + c^{-1}g(q, t))\) is a diffeomorphism of \( T \). Notice first that the map is well-defined; since \( g \) is a function on \( T \) it satisfies \( g(p, t+1) = g(p, t) \) and thus
	\begin{equation*}
		G(p, t+1)
		=
		(p, t + 1 + c^{-1} g(p, t+1))
		=
		(p, t + 1 + c^{-1} g(p, t))
		\sim
		(p, t + g(p, t))
		=
		G(p, t)
		.
	\end{equation*}
    Next, $G$ is a local diffeomorphism because 
    \[
    \text{Det}(\textup{D}G)=1+c^{-1}\partial_t g(q,t)>0,
    \]
    where we have used that $\alpha(\partial_t)=c+\partial_tg>0$ by transversality.
	Moreover, for each $q\in D$, the map $t+c^{-1}g(q,t)$ is locally injective because its derivative is positive and it is onto $\mathbb S^1$ because its image is the whole unit interval $[c^{-1}g(q,0),1+c^{-1}g(q,0)]\sim [0,1]$; accordingly, the map is a bijection. 
	The claim then follows from the fact that any bijective local diffeomorphism between compact manifolds is a global diffeomorphism.
    
    Notice that \( G^{*}(c \dif g) = c \dif (G^{*} t) = c \dif t + \dif g \).
	This means that a coordinate change of \( T \) by \( G \) gives us the embedded torus whose existence is claimed in the lemma, and we are done.
\end{proof}

\begin{proof}[Proof of \Cref{thm:embed-diffeomorphism-free-cosymplectic-structure}]
	Take the embedded torus \( T \subset M \) from Proposition~\ref{thm:cosymplectic-flow-torus} on which \( \alpha = c \dif t \).
	Choose a smaller subtorus \( T_{0} \subsetneq T \) and equip it with the \( 2 \)-form \( \beta' \) obtained from Lemma~\ref{L:beta}.
	Take polar coordinates \( (r, \theta, t) \) on \( T \) and choose a non-negative function \( \rho \colon M \to \R \) such that \( \rho\vert_{T_{0}} = 0 \), \( \rho\vert_{M \setminus T} = 1 \) and \( \rho \) on \( T \) only depends on the \( r \)-coordinate with \( \rho'(r) \geq 0 \).

	We can extend \( \beta' \) to a  \( 2 \)-form on \( M \), which we also denote by \( \beta' \), by setting it equal to \( 0 \) on \( M \setminus T \) and \( (1-\rho)\dif x \wedge \dif y \) on \( T \setminus T_{0} \).
	Since \( \beta' \) is equal to \( \dif x \wedge \dif y \) near the boundary of \( T_{0} \) this is clearly a smooth form.
	Note that this form is closed since \( \dif \beta' = -\frac{d}{dr}\rho(r) \dif r \wedge \dif x \wedge \dif y = 0 \).
	It also satisfies \( \alpha \wedge \beta' \geq 0 \) with equality only at points where \( \rho = 1 \).

	Now, it is convenient to introduce a second toroidal domain $T_1$, which is a neighborhood of the closure of $T$.
	Since \( H^{2}(T_{1}) = 0 \), the pullback of \( \beta \) to \( T_{1} \) is equal to \( \dif \eta \) for some \( 1 \)-form \( \eta \) on \( T_{1} \).
	On \( T_{1} \) with a thin central torus around \( r = 0 \) removed, we can write \( \eta = \eta_{r}\dif r + \eta_{\theta} \dif \theta + \eta_{t} \dif t \).
	Set \( k := \min_{T_{1} \backslash T_0} \eta_{\theta} \) and define the form
	\begin{equation*}
		\tilde\beta
		:=
		\dif\big(
			\rho(r)(\eta - k\dif \theta)
		\big)
		+
		\beta'
		.
	\end{equation*}
	This is a smooth \( 2 \)-form on \( M \).
	Indeed, in \( T_1 \setminus T \), the second term vanishes and \( \rho(r) = 1 \) so the first term simply becomes \( \dif(\eta - k\dif \theta) = \beta \), which admits the global extension of $\beta$ itself.
	On the other hand, near \( r = 0 \), where \( \dif \theta \) is not defined, the first term vanishes since there \( \rho = 0 \).
	Moreover, it is a closed $2$-form because the first term is exact on \( T_1 \)  and equal to \( \beta \) outside of \( T_1 \), and the second term \( \beta' \) is closed.
	Lastly, let us calculate
	\begin{equation*}
	\begin{split}
		\alpha \wedge \tilde{\beta}
		&=
		\alpha \wedge \frac{d}{dr}\rho(r) \dif r \wedge (\eta + k\dif \theta)
		+
		\rho(r) \alpha \wedge \dif \eta
		+
		\alpha \wedge \beta'
		\\
		&=
		c
		\frac{d}{dr}\rho(r) (\eta_{\theta} - k)
		\dif r \wedge \dif \theta \wedge \dif t
		+
		\rho(r) \alpha \wedge \beta
		+
		\alpha \wedge \beta'
		>
		0
		,
	\end{split}
	\end{equation*}
	since the first term is non-negative because \( \eta_{\theta} \geq k \) and \( \frac{d}{dr}\rho(r) \geq 0 \), the second term is positive except when \( \rho = 0 \), and the last term is positive except when \( \rho = 1 \).

	Now let a compatible metric \( g \) be given.
	By \Cref{thm:mainCorrespondenceResult}, there exists a metric \( \tilde{g} \) making \( \alpha = \hodgestar_{\tilde g} \tilde{\beta} \).
	The proof of this result is obtained by gluing together metrics on \( \ker\alpha \) which satisfy that their volume equals \( \tilde\beta \).
	This means that \( \tilde g \) can be constructed such that \( g \) and \( \tilde g \) agree outside of \( T \) where \( \beta = \tilde \beta \). This completes the proof of the proposition.
\end{proof}

\subsection{Proof of Theorem~\ref{T:main}}

Let $(M,g)$ be a Hodge-admissible Riemannian 3-manifold.
By definition, there exists a nowhere vanishing harmonic 1-form $\alpha$ on $M$ or, equivalently by

Lemma~\ref{thm:mainCorrespondenceResult}, the pair $(\alpha,\hodgestar\alpha)$ is a cosymplectic structure.

In this situation, Proposition~\ref{thm:embed-diffeomorphism-free-cosymplectic-structure} allows us to embed any area preserving diffeomorphism of the disk $f \colon D \to D$ which is the identity near the boundary as the time-$c$ return map on a Poincar\'e section $D \subset M$ of the Reeb field \( \tilde{X} \) of a cosymplectic structure that is a deformation of $(\alpha,\star\alpha)$ on a solid torus containing this disk. The proposition also gives a metric $\tilde g$, which coincides with $g$ in the complement of the aforementioned solid torus, so that $\alpha$ is harmonic with respect to the metric $\tilde g$, and hence the dual field $\tilde X$ is harmonic.

Choosing $f$ to simulate a universal Turing machine (cf.\ Theorem \ref{thm:auxiliary}) yields a harmonic field $\tilde X$ whose Poincaré map is $f$. By Proposition \ref{prop:harmonic-NS}, $\tilde X$ is thus a Turing complete stationary solution to the Navier-Stokes equations for any value of the viscosity $\nu\geq 0$. This completes the proof of the theorem.

\qed

\section*{Conclusion}

In this work, we have constructed stationary solutions to the Navier-Stokes equations on a wide class of Riemannian \(3\)-manifolds that simulate a universal Turing machine (i.e., they are Turing complete). These solutions are harmonic fields, and they are constructed using the machinery of cosymplectic geometry; being harmonic, we emphasize that they are also Euler steady states. 

Crucially, we construct vector fields that are solutions to the stationary Navier-Stokes equations for any value of the viscosity $\nu\geq 0$. The computational power is then not jeopardized because the fields are not perturbed when the internal friction is added, and therefore the fragility of Turing completeness under perturbations~\cite{Bournez} is not affected by the value of $\nu$.

Our construction yields stationary (i.e., time-independent) solutions. Whether such computational universality extends to genuinely time-dependent solutions remains an open and intriguing question. In the inviscid case, this was successfully accomplished in \cite{timedependent}, where time-dependent Turing-complete Euler flows were constructed.


\printbibliography

\end{document}